\newtheorem{hyp}{Hypothesis}
\newcommand{\R}{\mathbb{R}}
\newcommand{\be}{\begin{equation}}
\newcommand{\ee}{\end{equation}}
\newcommand{\ba}{\begin{aligned}}
\newcommand{\ea}{\end{aligned}}
\newtheorem{Theorem}{Theorem}
\newtheorem{Lemma}{Lemma}
\newtheorem{Cor}{Corollary}
\newcommand{\triplenorm}[1]{{\left\vert\kern-0.25ex\left\vert\kern-0.25ex\left\vert #1 
    \right\vert\kern-0.25ex\right\vert\kern-0.25ex\right\vert}}
\title{A two spaces extension of Cauchy-Lipschitz Theorem}
\begin{document}





\author{Charles Bertucci \textsuperscript{a},  Pierre-Louis Lions\textsuperscript{b,c}
}
\address{ \textsuperscript{a} CMAP, Ecole Polytechnique, UMR 7641, 91120 Palaiseau, France,\\ \textsuperscript{b} Universit\'e Paris-Dauphine, PSL Research University,UMR 7534, CEREMADE, 75016 Paris, France,\\ \textsuperscript{c}Coll\`ege de France, 3 rue d'Ulm, 75005, Paris, France. }

\maketitle

\begin{abstract}We adapt the classical theory of local well-posedness of evolution problems to cases in which the nonlinearity can be accurately quantified by two different norms. For ordinary differential equations, we consider $\dot{x} = f(x,x)$ for a function $f: V\times E \to E$ where $E$ is a Banach space and $V \hookrightarrow E$ a normed vector space. This structure allows us to distinguish between the two dependencies of $f$ in $x$ and allows to generalize classical results. We also prove a similar results for partial differential equations.
\end{abstract}



\setcounter{tocdepth}{1}

\tableofcontents
\section{Introduction}
This paper provides a simple extension of the classical theory of well-posedness of ordinary differential equations (ODE) in Banach spaces and Cauchy problems for partial differential equations (PDE). Among other things, our extension is valid when the dependence of the flow in the main argument can be decomposed into a dependence into two arguments taking values in different spaces. Particular instances of such an extension were used in Bertucci, Lasry and Lions \citep{bertucci2023lipschitz} to study mean field games master equations. We provide here a general version of these instances as we believe they can be of an independent interest.\\

The problems we are interested are either ODE of the form
\be\label{ode}
\dot x = f(x,x) \text{ on } (0,T), \quad x(0) = x_0 \in V,
\ee
or PDE of the form
\be\label{pde}
\partial_t u = F[u,u] \text{ on } (0,T) \times \Omega, \quad u|_{t = 0} = u_0 \in \Omega.
\ee
In the two previous equations, $T > 0$ is a fixed horizon time, $f : E\times V \to E$ where $V \subset E$ are two sets on which we are going to make assumptions later on and $F$ is an integro-differential operator defined on smooth functions $\Omega \to \R$ where $\Omega$ is some convex metric set. The main structure of problems we want to address here is a one in which the non-linearity can be accurately measured by means of two different norms of $x$ in the ODE setting, or two regularity norms in the PDE setting. The typical form of problems at interest is the one of non-linear transport equations of the form
\be\label{exode}
\partial_t u(t,x) = G(x,u(t,x))\cdot \nabla_x u(t,x) + g(x,u(t,x)) \text{ on } (0,T)\times \R^d,
\ee
with an initial condition $u_0$ which is for instance Lipschitz continuous on $\R^d$ and where $G$ and $g$ are two given functions.

In this setting we could consider $f(v,u) = G(x,v)\cdot \nabla_x u + g(x,u)$, with $V$ the set of Lipschitz functions on $\Omega$ and $E$ to be $L^\infty(\R^d,\R)$. This would place this problem in the scope of the ODE framework. However, generalizations of \eqref{exode} do not fall in the formalism of ODE, such as equations involving second order terms as
\[
\partial_t u(t,x) = \sum_{i,j = 1}^d a_{ij}(x) \partial_{ij}u(t,x) + G(x,u(t,x))\cdot \nabla_x u(t,x) + g(x,u(t,x)) \text{ on } (0,T)\times \mathbb{R}^d,
\]
where $(a_{ij}) = \sigma \sigma^\perp$ for some matrix valued function $\sigma$. We shall argue that, nonetheless, the two settings can be dealt with in exactly the same way. That is why we focus almost exclusively on the ODE setting and explain how to extend to the PDE framework afterwards.

\section{Assumptions and notation}

Let $(V, \triplenorm{\cdot})$ be a normed vector space and $(E,\|\cdot\|)$ a Banach space such that $V \subset E$. Let $f: V \times E \to E$. We assume the following. 
\begin{hyp}\label{hypEV}
If $(x_n)_{n \in \mathbb{N}}$ in $V$ is such that there exists $C \geq 0$ and $x \in E$,\\
$\lim_{n \to \infty} \|x_n - x \| = 0$ and $\sup_{n \in \mathbb{N}}\triplenorm{ x_n} \leq C$, then $x \in V$ and $\triplenorm x \leq \liminf_{n \to \infty} \triplenorm{x_n}$.
\end{hyp}

For any $1 \leq p \leq \infty$, $t > 0$, $x \in L^p([0,t],V)$, we note by $\triplenorm{x}_{p,t}$ its canonical norm, with the equivalent $\|x\|_{p,t}$ when $V$ is replaced by $E$.\\

We say that $x(\cdot) : [0,T)\to E$ is a solution of 
\[
\dot x = f(z,y) \text{ on } (0,T), \quad x(0) = x_0 \in E,
\]
if $s \to f(z(s),y(s)) \in L^1_{loc}([0,T),E)$ and for all $t \leq T$
\[
x(t) = \int_0^tf(z(s),y(s))ds + x_0.
\]
We shall make the two following assumptions on $f$.
\begin{hyp}\label{hyp1}
For all $T > 0, x_0 \in V$, $y \in L^\infty([0,T],V)$, there exists a unique solution $x \in L^\infty([0,T],V)\cap C([0,T],E)$ of 
\be\label{eqxy}
\dot x = f(y,x) \text{ on } (0,T), \quad x(0) = x_0.
\ee
Furthermore, for all $t \leq T$, this solution satisfies $\triplenorm{x(t)} \leq A(t,\triplenorm{x_0},\triplenorm{y(t)}_{\infty,t})$, where $A$ is a continuous function, non-decreasing in its three arguments, which does not depend on $y$ nor $x_0$, such that for all $M \geq 0$, $A(0,\triplenorm{x_0},M) = \triplenorm{x_0}$. 
\end{hyp}
\begin{hyp}\label{hyp2}
For any $x_0 \in V$, there exists $B,C : \mathbb{R}_+^2 \to \mathbb{R}_+$, depending only on $\triplenorm{x_0}$, satisfying, locally uniformly in $R >0$,
\be\label{hypbc}
\lim_{t\to 0} \frac{B(t,R)}{R}< 1,\quad \lim_{t\to 0} \frac{C(t,R)}{R} = 0,
\ee
such that for any $K > \triplenorm{x_0}$, there exists $t_0 > 0$ such that for all $(x_1,y_1)$ and $(x_2,y_2)$ satisfying
\[
\ba
\dot x_i = f(y_i,x_i) \text{ on } (0,t_0), \quad x_i(0) = x_0,\\
\forall t \leq t_0,\quad \|x_i(t)\|, \| y_i(t)\| \leq K,
\ea
\]
the following holds for $t \leq t_0$
\be\label{eqhyp2}
\|x_1-x_2\|_{\infty,t} \leq B(t,\|x_1-x_2\|_{\infty,t}) + C(t,\|y_1-y_2\|_{\infty,t}).
\ee
\end{hyp}

\section{Preliminaries}
Consider $x_0 \in V, R_0:=\triplenorm{x_0}$. We prove in this Section the following two Lemmas.
\begin{Lemma}
Under Hypothesis \ref{hyp1}, for all $K > R_0$, there exists $t_1 > 0$ depending only on $R_0$ and $K$ such that for $\triplenorm{y}_{\infty,t_1} \leq K$ implies that the unique solution of $x$ of \eqref{eqxy} satisfies $\triplenorm{x}_{\infty,t_1} \leq K$.
\end{Lemma}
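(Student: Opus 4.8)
The plan is to read the conclusion off the a priori estimate already supplied by Hypothesis \ref{hyp1}, and to close it by exploiting the continuity of the bounding function $A$ together with its boundary value at $t=0$. Fix $K > R_0$ and suppose for the moment that $t_1>0$ has been chosen (it will be pinned down at the end). Given $y \in L^\infty([0,t_1],V)$ with $\triplenorm{y}_{\infty,t_1} \leq K$, Hypothesis \ref{hyp1} yields the unique solution $x$ of \eqref{eqxy} on $[0,t_1]$ together with the pointwise bound
\be
\triplenorm{x(t)} \leq A\big(t,\,R_0,\,\triplenorm{y}_{\infty,t}\big), \qquad t \leq t_1 .
\ee

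Next I would convert this pointwise estimate into the desired $L^\infty$ estimate using only the monotonicity of $A$. For every $t \leq t_1$ one has $\triplenorm{y}_{\infty,t} \leq \triplenorm{y}_{\infty,t_1} \leq K$, so since $A$ is non-decreasing in each of its three arguments,
\be
A\big(t,\,R_0,\,\triplenorm{y}_{\infty,t}\big) \leq A\big(t_1,\,R_0,\,K\big), \qquad t \leq t_1 .
\ee
Taking the supremum over $t \leq t_1$ in the first display then gives $\triplenorm{x}_{\infty,t_1} \leq A(t_1,R_0,K)$. Thus everything reduces to arranging $A(t_1,R_0,K) \leq K$.

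Finally I would select $t_1$. Because $A(0,R_0,K) = R_0 < K$ and $A$ is continuous in its first argument, $A(t_1,R_0,K) \to R_0 < K$ as $t_1 \to 0^+$, so there exists $t_1 > 0$ with $A(t_1,R_0,K) \leq K$; combined with the previous step this gives $\triplenorm{x}_{\infty,t_1} \leq K$. Since $A$ depends neither on $y$ nor on $x_0$, this threshold $t_1$ depends only on $R_0$ and $K$, exactly as claimed. I do not expect a genuine obstacle here: the only points requiring care are the passage from the pointwise bound to the $L^\infty$ bound (handled by monotonicity, replacing both $t$ by $t_1$ and $\triplenorm{y}_{\infty,t}$ by its uniform bound $K$) and the bookkeeping showing that $t_1$ is a function of $R_0$ and $K$ alone. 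The whole statement is essentially the observation that the a priori bound of Hypothesis \ref{hyp1} degenerates continuously to $R_0 < K$ as $t \to 0$.
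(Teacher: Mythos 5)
Your proposal is correct and follows essentially the same route as the paper, whose proof is the one-liner ``take $t_1 = \max\{t > 0,\ A(t,R_0,K) \leq K\}$''; you simply spell out the details this relies on, namely the monotonicity of $A$ in its arguments and the fact that $A(t,R_0,K) \to A(0,R_0,K) = R_0 < K$ as $t \to 0^+$ by continuity.
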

\begin{proof}
It suffices to take $t_1 = \max\{t  > 0, A(t,R_0,K) \leq K\}$.
\end{proof}
\begin{Lemma}
Under Hypothesis \ref{hyp2}, for any $K > 0$, there exists $t_2 > 0$ and $\theta \in (0,1)$ depending only on $K$ and $R_0$ such that for any $(x_1,y_1),(x_2,y_2)$ as in Hypothesis \ref{hyp2}, for any $t \leq t_2$
\[
\|x_2-x_1\|_{\infty,t} \leq \theta \|y_2-y_1\|_{\infty,t}.
\]
\end{Lemma}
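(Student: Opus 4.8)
The plan is to start from the inequality provided by Hypothesis \ref{hyp2}, namely
\[
\|x_1-x_2\|_{\infty,t} \leq B(t,\|x_1-x_2\|_{\infty,t}) + C(t,\|y_1-y_2\|_{\infty,t}),
\]
valid for all $t \leq t_0$, and to absorb the $B$-term on the left-hand side using the first limit in \eqref{hypbc}. The key observation is that $\lim_{t\to 0} B(t,R)/R < 1$ holds locally uniformly in $R$, so there exist $\beta \in (0,1)$ and $t_2 \leq t_0$ such that $B(t,R) \leq \beta R$ for all $t \leq t_2$ and all $R$ in the relevant bounded range. The bounded range is what matters here: since both $x_i$ and $y_i$ take values in the ball of radius $K$ in $E$, the quantities $\|x_1-x_2\|_{\infty,t}$ and $\|y_1-y_2\|_{\infty,t}$ are bounded by $2K$, so it suffices to have the uniform bound on $B$ and $C$ over $R \in [0,2K]$, which is exactly what local uniformity delivers.

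First I would fix $K$, set $R_0 = \triplenorm{x_0}$, and invoke the local uniformity of the two limits in \eqref{hypbc} on the interval $[0,2K]$. This produces a $\beta \in (0,1)$ and a threshold $t_2$ such that for $t \leq t_2$ one has $B(t, \|x_1-x_2\|_{\infty,t}) \leq \beta\,\|x_1-x_2\|_{\infty,t}$. Substituting into the Hypothesis \ref{hyp2} inequality and rearranging gives
\[
(1-\beta)\,\|x_1-x_2\|_{\infty,t} \leq C(t,\|y_1-y_2\|_{\infty,t}).
\]
Then I would use the second limit, $\lim_{t\to 0} C(t,R)/R = 0$, again locally uniformly on $[0,2K]$: shrinking $t_2$ if necessary, I can arrange $C(t,R) \leq \gamma R$ for $t \leq t_2$ and $R \in [0,2K]$, with $\gamma$ small enough that $\theta := \gamma/(1-\beta)$ lies in $(0,1)$. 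Combining the two estimates yields
\[
\|x_1-x_2\|_{\infty,t} \leq \frac{C(t,\|y_1-y_2\|_{\infty,t})}{1-\beta} \leq \frac{\gamma}{1-\beta}\,\|y_1-y_2\|_{\infty,t} = \theta\,\|y_1-y_2\|_{\infty,t},
\]
which is the desired conclusion, with $t_2$ and $\theta$ depending only on $K$ and $R_0$ as required.

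The main subtlety I expect is the careful handling of the word \emph{locally uniformly in $R$}. I must make sure that the constants $\beta$ and $\gamma$ extracted from the two limits are genuinely uniform over the whole range $[0,2K]$ of possible values of the two difference-norms, rather than depending pointwise on a particular $R$; this is what justifies converting the limiting behaviour in \eqref{hypbc} into the two linear bounds $B(t,R) \leq \beta R$ and $C(t,R)\leq \gamma R$ on that fixed interval. A secondary point is to take the final $t_2$ as the minimum of $t_0$ (from Hypothesis \ref{hyp2}) and the two thresholds coming from $B$ and $C$, and to verify that the choice of $\gamma$ can indeed be made small enough to force $\theta < 1$ given that $\beta$ is already fixed below $1$; since the $C$-limit is $0$ rather than merely below $1$, this is always achievable by further shrinking $t_2$. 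With the uniformity handled, the rest is the elementary absorption and rearrangement sketched above.
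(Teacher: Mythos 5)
Your proof is correct and is essentially identical to the paper's: both absorb the $B$-term via the first limit in \eqref{hypbc} (your $\beta$ is the paper's $\theta_1$), then shrink $t_2$ so that $C(t,R) \leq \gamma R$ on the bounded range of difference norms, with $\theta = \gamma/(1-\beta) \in (0,1)$. If anything, your explicit use of the range $[0,2K]$ for both difference norms is slightly more careful than the paper's, which writes $R \leq K$ at the corresponding step.
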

\begin{proof}
For $t_0$ given by Hypothesis \ref{hyp2},  $\|x_1 - x_2\|_{\infty,t_0} \leq 2K$. Hence, thanks to the assumption on $B$, choosing $t_2$ small enough, there exists $\theta_1 \in (0,1)$ such that for all $t \leq t_2$
\[
\|x_1 -x_2\|_{\infty,t} \leq \theta_1\|x_1-x_2\|_{\infty,t} +  C(t,\|y_1-y_2\|_{\infty,t}).
\]
Thus, there exists $\theta \in (0,1)$ such that taking $t_2$ so that for any $R \leq K$, $(1-\theta_1)^{-1}C(t_2,R) \leq R\theta$, we finally obtain that for all $t \leq t_2$
\[
\|x_2-x_1\|_{\infty,t} \leq \theta \|y_2-y_1\|_{\infty,t}.
\]
\end{proof}
The two Lemmas are concerned with \eqref{eqxy}. The first one yields some sort of boundedness for $\triplenorm{\cdot}$ while the second one yields some stability for $\|\cdot\|$.

\section{Main result}
Our main result is the following.
\begin{Theorem}\label{thm}
Under Hypotheses \ref{hypEV}, \ref{hyp1} and \ref{hyp2}, for any $x_0 \in V$, there exists $T_c \in (0,\infty]$ such that for all $T < T_c$, there exists a unique solution $x \in L^\infty([0,T],V)\cap C([0,T],E)$ of \eqref{ode} on $[0,T_c)$. Moreover, if $T_c < \infty$, then $\underset{t \to T_c}{\text{limess }} \triplenorm{x(t)} = \infty$.
\end{Theorem}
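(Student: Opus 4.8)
The plan is to recast \eqref{ode} as a fixed point problem for the solution operator of the frozen equation \eqref{eqxy}, and to solve it by the contraction mapping principle, the decisive structural point being that the contraction is measured in the weak norm $\|\cdot\|$ while the constraint set is kept bounded in the strong norm $\triplenorm{\cdot}$. Concretely, given $y$ I write $\Phi(y)$ for the unique solution of $\dot x = f(y,x)$, $x(0)=x_0$, furnished by Hypothesis \ref{hyp1}, so that any fixed point of $\Phi$ solves \eqref{ode}. Fixing $K > R_0$, I would work on
\[
S_t = \{\, y \in C([0,t],E) : y(0) = x_0,\ \triplenorm{y}_{\infty,t} \leq K \,\},
\]
endowed with the distance induced by $\|\cdot\|_{\infty,t}$. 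The first preliminary Lemma gives $t_1>0$ such that $\Phi$ maps $S_t$ into itself for $t \leq t_1$; converting the $V$-bound into an $E$-bound through the continuous embedding $V \hookrightarrow E$, the second preliminary Lemma provides $t_2>0$ and $\theta\in(0,1)$ for which $\Phi$ is a $\theta$-contraction for $\|\cdot\|_{\infty,t}$, $t\leq t_2$.

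The crux of the argument, and where I expect the only genuine difficulty, is that $S_t$ must be \emph{complete} for the weak metric $\|\cdot\|_{\infty,t}$ even though it is cut out by a strong bound. This is precisely the role of Hypothesis \ref{hypEV}: a $\|\cdot\|_{\infty,t}$-Cauchy sequence $(y_n)$ in $S_t$ converges uniformly in $E$ to some $y\in C([0,t],E)$, and since $\triplenorm{y_n(s)}\leq K$ for every $s$, the lower semicontinuity granted by \ref{hypEV} forces $y(s)\in V$ with $\triplenorm{y(s)}\leq \liminf_n\triplenorm{y_n(s)}\leq K$; as also $y(0)=x_0$, we get $y\in S_t$. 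Banach's theorem then yields a unique fixed point on $[0,t_*]$, $t_*=\min(t_1,t_2)$, i.e. a local solution in $L^\infty([0,t_*],V)\cap C([0,t_*],E)$. Local uniqueness among \emph{all} solutions in this class is immediate: two such solutions $x_1,x_2$ form admissible pairs $(x_i,x_i)$ for Hypothesis \ref{hyp2}, so the second Lemma gives $\|x_1-x_2\|_{\infty,t}\leq \theta\|x_1-x_2\|_{\infty,t}$ with $\theta<1$, whence $x_1=x_2$ for small $t$, and a standard connectedness/continuation argument (using $E$-continuity to restart from the supremum of coincidence times) propagates the equality to the whole common interval.

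For the maximal solution I would glue local solutions, which coincide on overlaps by uniqueness, and set $T_c$ to be the supremum of existence times, producing a solution on $[0,T_c)$. The blow-up criterion rests on one observation: the local existence time $t_*$ depends on the initial datum only through $\triplenorm{x_0}$ and, by the monotonicity of $A$ (and the corresponding control of $B,C$ in $\triplenorm{x_0}$), is bounded below uniformly whenever $\triplenorm{x_0}$ is bounded by a fixed constant $\bar R$, say by some $t_*(\bar R)>0$. Suppose now $T_c<\infty$ but $\triplenorm{x(t)}$ does not tend essentially to $\infty$; then there exist times $s$ arbitrarily close to $T_c$ with $\triplenorm{x(s)}\leq \bar R$. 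Restarting \eqref{ode} from such an $s$ yields, by the local theory above, a solution on an interval of fixed length $t_*(\bar R)$; choosing $s$ with $T_c - s < t_*(\bar R)$ and invoking uniqueness to match it with $x$ on the overlap extends the solution strictly beyond $T_c$, contradicting the maximality of $T_c$. This gives $\underset{t\to T_c}{\text{limess }}\triplenorm{x(t)} = \infty$ and completes the proof.
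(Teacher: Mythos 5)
Your proposal is correct and follows essentially the same route as the paper: invariance of the $\triplenorm{\cdot}$-ball via Lemma 1, a $\theta$-contraction in $\|\cdot\|_{\infty,t}$ via Lemma 2, Hypothesis \ref{hypEV} to keep the $E$-limit inside the $V$-ball, and the same restart-on-balls continuation argument for the blow-up alternative. The only cosmetic difference is that you invoke Banach's fixed point theorem on the complete metric space $S_t$ (completeness being exactly where Hypothesis \ref{hypEV} enters), whereas the paper runs the Picard iteration explicitly and identifies the limit as a solution by comparing it with the auxiliary solution $z$ of $\dot z = f(x,z)$ --- the same argument in abstract form.
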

\begin{proof}
Take $K > \triplenorm{x_0}$, $t_1>0$ given by Lemma 1 and $t_2>0$ given by Lemma 2. If $t_2 > t_1$, set $t_2 = t_1$. Define, for any $t \in [0,t_2]$ $x_0(t) = x_0$ and for any $n \geq 0$, $x_{n+1}$ as the unique solution of 
\[
\dot{x}_{n+1} = f(x_n,x_{n+1}) \text{ on } [0,t_2], \quad x_{n+1}(0) = x_0.
\]
Thanks to Lemma 1, for all $n \geq 0$, $\triplenorm{x_n}_{\infty,t_2} \leq K$. Hence, we deduce from Lemma 2 that there exists $\theta \in (0,1)$ such that for all $n \geq 0$
\[
\|x_{n+2}-x_{n+1}\|_{\infty,t_2} \leq \theta \|x_{n+1}-x_n\|_{\infty,t_2}.
\]
Hence, $(x_n)_{n \geq 0}$ is a Cauchy sequence in $L^\infty([0,t_2],E)$, thus it converges toward some limit $x \in L^\infty([0,t_2],E)$. For almost every $t \in [0,t_2]$, $(x_n(t))_{n \geq 0}$ is a bounded sequence of $V$ converging toward $x(t)$. Hence, from Hypothesis \ref{hypEV}, $x(t) \in V$ and $\triplenorm{x(t)} \leq K$, for almost every $t \in [0,t_2]$.

From Hypothesis 1, we can consider $z$, the unique solution of 
\[
\dot{z} = f(x,z) \text{ on } [0,t_2], \quad z(0) = x_0.
\]
From Lemma 2, $\|z-x_{n+1}\|_{\infty,t_2} \leq \theta \|x- x_n\|_{\infty,t_2}$. Passing to the limit $n \to \infty$, we obtain that $z = x$. Hence $x$ is a solution of \eqref{ode} on $[0,t_2]$. The uniqueness of such a solution on $[0,t_2]$ trivially follows from Lemma 2.\\

Note that $t_2$, which was given by Lemma 2, only depends on $K$, hence on $\triplenorm{x_0}$. From this we deduce that the existence time we obtained is bounded from below on balls of $V$. This implies that we can repeat the same argument by replacing $x_0$ with $x(t_2)$. We then arrive at the construction of a solution on $[t_2,t_3]$ for some $t_3 > t_2$ depending only on $\triplenorm{x(t_2)}$. By gluing the two solutions, we then have constructed a solution on $[0,t_3]$, which is clearly unique in view of Lemma 2. Repeating the argument, we build a sequence of increasing times $(t_n)_{n \geq 2}$ such that $x$ is the unique solution of \eqref{ode} on $[0,t_n]$ for all $n \geq 2$. Note $T_c :=\lim_{n \to \infty} t_n$. If $T_c = \infty$, the result is proved. If not, assume that $M :=\triplenorm{x}_{\infty,T_c} < \infty$. Then, from the first part of the proof, there exists $\delta >0$ depending only on $M$, such that for any $t \in [0,T_c)$, we can extend the solution $x$ to $[t,t+\delta]$. Up to changing $T_c$ into $T_c + \delta$, this yields an alternative: either $T_c = \infty$, or $\underset{t \to T_c}{\text{limess }} \triplenorm{x(t)} = \infty$.
\end{proof}
An immediate Corollary of Lemma 2 leads to the following version of our result.
\begin{Cor}
The conclusions of Theorem \ref{thm} hold true if in Hypothesis \ref{hyp2}, we exchange $B$ and $C$ in \eqref{hypbc}.
\end{Cor}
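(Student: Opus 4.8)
The plan is to notice that Hypothesis \ref{hyp2} enters the proof of Theorem \ref{thm} only through the conclusion of Lemma 2 --- the contraction estimate $\|x_2-x_1\|_{\infty,t} \leq \theta\|y_2-y_1\|_{\infty,t}$ with $\theta \in (0,1)$. Once that estimate is in hand, the Picard iteration, the passage to the limit (via Hypothesis \ref{hypEV}), and the continuation argument make no further reference to $B$ or $C$. Moreover Lemma 1 rests on Hypothesis \ref{hyp1}, which the corollary leaves untouched. Hence it suffices to re-establish the statement of Lemma 2 after exchanging the two conditions in \eqref{hypbc}, i.e.\ under $\lim_{t\to 0} B(t,R)/R = 0$ and $\lim_{t\to 0} C(t,R)/R < 1$, both locally uniformly in $R$, after which the proof of Theorem \ref{thm} applies verbatim.

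To redo Lemma 2 I would run the same two-step absorption as in the original proof, but with the roles of the two terms interchanged. Fix $K>0$ and let $t_0$ be given by Hypothesis \ref{hyp2}; as before, all four trajectories being bounded by $K$ in $\|\cdot\|$ gives $\|x_1-x_2\|_{\infty,t_0} \leq 2K$ and $\|y_1-y_2\|_{\infty,t_0} \leq 2K$. First I would use the now-strong condition on $B$: since $B(t,R)/R \to 0$ uniformly for $R \leq 2K$, for any prescribed $\theta_1 \in (0,1)$ there is $t_2 \leq t_0$ with $B(t,\|x_1-x_2\|_{\infty,t}) \leq \theta_1\|x_1-x_2\|_{\infty,t}$ for $t\leq t_2$; substituting into \eqref{eqhyp2} and absorbing yields
\[
\|x_1-x_2\|_{\infty,t} \leq (1-\theta_1)^{-1}\,C(t,\|y_1-y_2\|_{\infty,t}), \qquad t\leq t_2.
\]
Next I would invoke the now-weak condition on $C$: there is $c<1$ and, shrinking $t_2$ if necessary, $C(t,R) \leq cR$ for all $t\leq t_2$ and $R\leq 2K$, whence
\[
\|x_1-x_2\|_{\infty,t} \leq (1-\theta_1)^{-1}c\,\|y_1-y_2\|_{\infty,t}, \qquad t\leq t_2.
\]

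The one point that needs care --- and the only substantive departure from the original Lemma 2 --- is ensuring the resulting constant $\theta := (1-\theta_1)^{-1}c$ is strictly below $1$. Here the factor $(1-\theta_1)^{-1}>1$ produced by absorbing $B$ works against us, so the smallness of a single term no longer closes the argument by itself, as it did before. The resolution is that $c<1$ is a \emph{fixed} gap supplied by the hypothesis on $C$, whereas $\theta_1$ can be driven to $0$ independently by shrinking $t$: choosing $\theta_1 < 1-c$ (and $t_2$ correspondingly small) forces $1-\theta_1 > c$, hence $\theta = (1-\theta_1)^{-1}c < 1$. This reproduces the conclusion of Lemma 2 with $t_2$ and $\theta$ depending only on $K$ and $R_0$, which is all that Theorem \ref{thm} uses.
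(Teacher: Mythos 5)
Your proof is correct and is precisely the adaptation the paper has in mind: the paper offers no written proof, declaring the corollary ``immediate'' from Lemma 2, and your argument spells out exactly that adaptation --- rerun the absorption with the roles of $B$ and $C$ exchanged, feeding into the otherwise unchanged proof of Theorem \ref{thm}. You also correctly isolate and resolve the one genuine subtlety (choosing $\theta_1 < 1-c$ so that $(1-\theta_1)^{-1}c < 1$), which is the detail that makes the word ``immediate'' honest.
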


\section{The case of partial differential equations}
In the case of problems of the form \eqref{pde}, we can have exactly the same approach. It consists in assuming that Hypothesis \ref{hypEV} holds and that for any $u_0 \in V$, we can define for any $T > 0$ the solution operator $\Psi$ which associates to $v \in L^\infty([0,T],V)$, the unique solution $\Psi(v) \in L^\infty([0,T],V)\cap C([0,T],E)$ of 
\[
\partial_t \Psi(v) = F[v,\Psi(v)]  \text{ in } (0,T)\times \Omega,
\]
which has to be understood in a sense which is context dependent. We shall then say that $u$ is a solution of \eqref{pde} on $(0,T)$ if $\Psi(u) = u$.

The analogue of Hypotheses \ref{hyp1} and \ref{hyp2} is this context is the following.
\begin{hyp}\label{hypPDE}
Take $T >0$ and $u_0 \in V$. The operator $\Psi$ is well defined and there exists $A$, $B$ and $C$ as in Hypotheses \ref{hyp1} and \ref{hyp2} such that 
\begin{itemize}
\item For any $v \in L^\infty([0,T],V)$, for all $t \leq T, \triplenorm{\psi(t)} \leq A(t,\triplenorm{u_0},\triplenorm{v}_{\infty,t})$.
\item For any $K > \triplenorm{u_0}$, there exists $t_0 > 0$ such that for $v_1,v_2 \in L^\infty([0,T],V)$ such that $\|v_1\|_{\infty,t_0},\|v_2\|_{\infty,t_0},\|\Psi(v_1)\|_{\infty,t_0},\|\Psi(v_1)\|_{\infty,t_0} \leq K$, 
\[
\|\Psi(v_1)-\Psi(v_2)\|_{\infty,t} \leq B(t,\|\Psi(v_1)-\Psi(v_2)\|_{\infty,t}) + C(t,\|v_1-v_2\|_{\infty,t}).
\]
\end{itemize}
\end{hyp}
Following step by step the arguments of the previous case, we arrive at the
\begin{Theorem}
Under Hypotheses \ref{hypEV} and \ref{hypPDE}, for any $u_0 \in V$, there exists $T_c \in (0,\infty]$ such that for all $T < T_c$, there exists a unique solution $u \in L^\infty([0,T],V)\cap C([0,T],E)$ of \eqref{pde} on $[0,T_c)$. Moreover, if $T_c < \infty$, then $\underset{t \to T_c}{\text{limess }} \triplenorm{u(t)} = \infty$.
\end{Theorem}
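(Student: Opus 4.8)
The plan is to transcribe the proof of Theorem \ref{thm} line for line, with the abstract solution operator $\Psi$ playing the role of the map $y\mapsto x$ that solves \eqref{eqxy}, and with $u_{n+1}=\Psi(u_n)$ replacing the Picard iteration $\dot x_{n+1}=f(x_n,x_{n+1})$. First I would re-derive the two preliminary lemmas in this setting, which is where Hypothesis \ref{hypPDE} is used. Writing $R_0:=\triplenorm{u_0}$, the first bullet together with the properties of $A$ (continuous, non-decreasing, $A(0,\triplenorm{u_0},M)=\triplenorm{u_0}$) gives, exactly as in Lemma 1, for every $K>R_0$ a time $t_1=\max\{t>0 : A(t,R_0,K)\le K\}>0$ such that $\triplenorm{v}_{\infty,t_1}\le K$ forces $\triplenorm{\Psi(v)}_{\infty,t_1}\le K$. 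The second bullet, combined with the structural conditions \eqref{hypbc} on $B$ and $C$, reproduces Lemma 2: there exist $t_2>0$ and $\theta\in(0,1)$, depending only on $K$ and $R_0$, so that $\|\Psi(v_1)-\Psi(v_2)\|_{\infty,t}\le\theta\|v_1-v_2\|_{\infty,t}$ for $t\le t_2$, provided $v_1,v_2,\Psi(v_1),\Psi(v_2)$ are bounded by $K$ in the $E$-norm on $[0,t_0]$.

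Next I would run the scheme on $[0,t_2]$, shrinking $t_2$ to $t_1$ if necessary: set $u_0(t)\equiv u_0$ and $u_{n+1}=\Psi(u_n)$. The first lemma gives $\triplenorm{u_n}_{\infty,t_2}\le K$ for all $n$, which (via the embedding $V\hookrightarrow E$, see below) keeps every iterate admissible for the second lemma, whence $\|u_{n+2}-u_{n+1}\|_{\infty,t_2}\le\theta\|u_{n+1}-u_n\|_{\infty,t_2}$. Thus $(u_n)$ is Cauchy in $L^\infty([0,t_2],E)$ and converges to some $u$. For a.e.\ $t$, the sequence $(u_n(t))$ is $V$-bounded and converges in $E$ to $u(t)$, so Hypothesis \ref{hypEV} yields $u(t)\in V$ with $\triplenorm{u(t)}\le K$ a.e., i.e.\ $u\in L^\infty([0,t_2],V)\cap C([0,t_2],E)$. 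To identify $u$ as a solution I would put $z=\Psi(u)$ and apply the contraction to the pair $(u,u_n)$: $\|z-u_{n+1}\|_{\infty,t_2}=\|\Psi(u)-\Psi(u_n)\|_{\infty,t_2}\le\theta\|u-u_n\|_{\infty,t_2}\to0$, so $z=u$, that is $\Psi(u)=u$, which is precisely the definition of a solution of \eqref{pde} on $[0,t_2]$; uniqueness on $[0,t_2]$ is immediate from the same contraction.

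The continuation and blow-up step is then identical to the ODE case. Since $t_2$ depends only on $\triplenorm{u_0}$, the local existence time is bounded below on balls of $V$, so one restarts the construction from $u(t_2)$, glues the solutions (legitimate by uniqueness), and obtains an increasing sequence $(t_n)$ with $T_c:=\lim_n t_n$. If $T_c=\infty$ we are done; otherwise, assuming $M:=\triplenorm{u}_{\infty,T_c}<\infty$, the local theory furnishes a uniform extension length $\delta>0$ depending only on $M$, allowing one to extend the solution past $T_c$, a contradiction. This yields the dichotomy $T_c=\infty$ or $\underset{t \to T_c}{\text{limess }}\triplenorm{u(t)}=\infty$.

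The one point demanding genuine care — and essentially the only place the PDE case differs from the ODE case — is that $\Psi$ is prescribed axiomatically rather than built by a contraction, so one must check that every object invoked lies in the admissibility region of the second bullet of Hypothesis \ref{hypPDE}, namely the four $E$-norms $\|v_i\|_{\infty,t_0},\|\Psi(v_i)\|_{\infty,t_0}$ must be controlled. The iteration only delivers the $V$-bound $\triplenorm{u_n}_{\infty,t_2}\le K$; the continuity of the embedding $V\hookrightarrow E$ converts this into an $E$-bound (with an embedding constant, so one may need to apply the first lemma with a slightly larger radius and the second with the corresponding $E$-bound), thereby guaranteeing that the fixed point $z=\Psi(u)$, the restarted problem from $u(t_2)$, and the glued function are all admissible. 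Once this bookkeeping is in place, the remaining arguments transcribe those of Theorem \ref{thm} verbatim.
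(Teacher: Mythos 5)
Your proposal is correct and is exactly the paper's intended argument: the paper proves this theorem simply by stating that one follows the ODE proof of Theorem \ref{thm} step by step, with $\Psi$ playing the role of the solution map for \eqref{eqxy} and $u_{n+1}=\Psi(u_n)$ as the iteration, which is precisely what you do. Your added bookkeeping about converting the $V$-bound $\triplenorm{u_n}_{\infty,t_2}\leq K$ into the $E$-bounds required by the second bullet of Hypothesis \ref{hypPDE} is a point the paper leaves implicit (the same issue already arises, silently, in the ODE proof), so it is a welcome clarification rather than a departure.
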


\section{Comments}

\subsection{On the assumptions}
We briefly comment on the Hypothesis \ref{hyp1} and \ref{hyp2}. We believe our framework to be particularly helpful when the equation \eqref{eqxy} is an \textit{easy} equation to solve in $x$.\\

Hypothesis \ref{hyp1} is typically obtained by proving an a priori estimate on the solution of \eqref{eqxy}. Formally, it states that the regularity of the solution $x$ of \eqref{eqxy} deteriorates continuously with the time, so that, for sufficiently small times, it can stay bounded.\\

Hypothesis \ref{hyp2} is typically a stability estimate on the solution of \eqref{eqxy} with respect to variations of $y$. It is stated here in a quite general way. Note that Hypothesis \ref{hyp2} is verified for instance if \eqref{eqhyp2} is replaced by 
\[
\|x_1(t)-x_2(t)\| \leq K_0(\|x_1-x_2\|_{p,t} + \|y_1-y_2\|_{q,t}),
\]
for $1 \leq p,q < \infty$ and $K_0 > 0$, namely thanks to the inequality $\|z\|_{p,t} \leq t^{\frac 1p} \|z\|_{\infty,t}$.

\subsection{Natural extension}
Note that Theorem \ref{thm} trivially extends to the case of 
\[
\dot x = f(t,x,x) \text{ on } (0,T), \quad x(0) = x_0 \in V,
\]
given that, for any $\delta \geq 0$, $f(\cdot+\delta,\cdot,\cdot)$ satisfies Hypotheses \ref{hyp1} and \ref{hyp2}.

\section*{Acknowledgments}
The authors have been partially supported by the Chair FDD/FIME (Institut Louis Bachelier). The first author has been partially supported by the Lagrange Mathematics and Computing Research Center.
\bibliographystyle{plainnat}
\bibliography{bibmatrix}

\end{document}